\renewcommand{\i}{{i'}}
\renewcommand{\j}{{j'}}
\newcommand{\R}{\mathbb R}
\newcommand{\Mu}{{\mathcal M}}
\newcommand{\Id}{{\bf 1}}
\newcommand{\weg}[1]{}
\newcommand{\spann}{\mathrm{span}}
\theoremstyle{plain}
\newtheorem{thm}{Theorem}
\newtheorem*{thm*}{Theorem}
\newtheorem{lem}{Lemma}
\newtheorem{cor}{Corollary}
\newtheorem*{con}{Convention}
\theoremstyle{definition}
\theoremstyle{remark}
\newtheorem{rem}{Remark}
\title{The Tanno-Theorem for Kählerian metrics with arbitrary signature}
  \author{A. Fedorova, S. Rosemann}
\thanks{supported by  GK 1523}
\begin{document}

\begin{abstract}
Considering a non-constant smooth solution $f$ of the Tanno equation on a closed, connected Kähler manifold $(M,g,J)$ with positively definite metric $g$, Tanno showed that the manifold can be finitely covered by $(\mathbb{C}P(n),\mbox{const}\cdot g_{FS})$, where $g_{FS}$ denotes the Fubini-Study metric of constant holomorphic sectional curvature equal to $1$. The goal of this paper is to give a proof of Tannos Theorem for Kähler metrics with arbitrary signature.  
\end{abstract}

\maketitle

\section{Introduction}
\label{sec:hpr}

\subsection{Tanno equation and Main result}
Let $(M,g,J)$ be a pseudo-Riemannian Kähler manifold of real dimension $2n$. During the whole paper, we are using tensor notation, for example $T^{i_{1}...i_{p}}_{j_{1}...j_{q},k}$ means covariant differentiation of the tensor $T^{i_{1}...i_{p}}_{j_{1}...j_{q}}$ with respect to the Levi-Civita connection of $g$. If $\omega_{i}$ is a $1$-form on $M$, then $\bar{\omega}_{i}=J^{\alpha}_{i}\omega_{\alpha}$. Moreover, the Kählerian $2$-form is denoted by the symbol $J_{ij}=g_{i\alpha}J^{\alpha}_{j}$.
We are interested in the question, if $(M,g,J)$ allows the existence of a non-constant solution $f$ of the equation
\begin{align}
f_{,ijk}+c(2f_{,k}g_{ij}+f_{,i}g_{jk}+f_{,j}g_{ik}-\bar{f}_{,i}J_{jk}-\bar{f}_{,j}J_{ik})=0\label{eq:tanno}
\end{align}
Originally this equation appeared first in spectral geometry, see \cite{Tanno1978}. Let $(\mathbb{C}P(n),g_{FS})$ be the complex projective space with Fubini-Study metric $g_{FS}$ of constant holomorphic sectional curvature equal to $1$. Then, the eigenfunctions of the Laplacian to the first eigenvalue $-(n+1)$ satisfy \eqref{eq:tanno} with $c=\frac{1}{4}$. 
\begin{rem}
Conversely, contracting \eqref{eq:tanno} with $g^{ij}$ shows that $(\Delta f)_{,k}=-4c(n+1)f_{,k}$, hence $$\Delta(f+C)=-4c(n+1)(f+C)$$ for any solution $f$ of \eqref{eq:tanno} and some constant $C$.
\end{rem}
The goal of this paper is to give a proof of the following Theorem:
\begin{thm}
\label{thm:main}
   Let  $f$ be  a non-constant smooth function on a closed, connected pseudo-Riemannian Kähler manifold
  $(M^{2n},g,J)$ such that the equation (\ref{eq:tanno})
    is  fulfilled for some constant $c$.  Then $c\neq 0$ and $(M^{2n}, 4cg, J)$ can be finitely covered by $(\mathbb{C}P(n), g_{FS}, J_{standard})$.
\end{thm}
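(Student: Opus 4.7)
The plan is to dispose of the trivial case $c=0$ via a maximum/minimum argument on the closed manifold, to reduce to the normalization $c=\tfrac14$ by a rescaling of the metric, to use the integrability conditions of \eqref{eq:tanno} to force $(M,g,J)$ to be of constant holomorphic sectional curvature $1$, and finally to conclude by invoking the classification of pseudo-K\"ahler space forms together with a compactness/signature argument.

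If $c=0$, then \eqref{eq:tanno} reduces to $f_{,ijk}=0$, so the Hessian $f_{,ij}$ is a parallel symmetric $(0,2)$-tensor. Since $M$ is closed, $f$ attains a maximum at some $p$ and a minimum at some $q$; there $f_{,ij}$ is negative, respectively positive, semi-definite. Parallel transport from $p$ to $q$ is a linear isomorphism identifying these two bilinear forms, so the Hessian is both negative and positive semi-definite everywhere, hence zero. Then $f_{,i}$ is parallel and vanishes at the critical points of $f$, so it vanishes identically, forcing $f$ constant---a contradiction. Assuming $c\neq 0$, I would replace $g$ by $\tilde g=4cg$; the Levi-Civita connection and thus $f_{,ijk}$ are unchanged, while $\tilde J_{ij}=4c\,J_{ij}$. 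A short calculation shows that $f$ satisfies \eqref{eq:tanno} for $(\tilde g,J)$ with constant $\tilde c=\tfrac14$, so we may assume $c=\tfrac14$ throughout.

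The main step is to prove that $(M,g,J)$ has constant holomorphic sectional curvature $1$, i.e.
\[
R_{ijkl}=\tfrac14\bigl(g_{ik}g_{jl}-g_{il}g_{jk}+J_{ik}J_{jl}-J_{il}J_{jk}+2J_{ij}J_{kl}\bigr).
\]
Commuting $\nabla_j$ and $\nabla_k$ in the third covariant derivative of $f$ via the Ricci identity and substituting \eqref{eq:tanno} expresses the contraction of the Riemann tensor with $\nabla f$ as an explicit tensorial expression in $f_{,\ast}$, $g$, and $J$. Using the K\"ahler symmetries of $R$ (including the identity $R_{ijkl}=J_i{}^{a}J_j{}^{b}R_{abkl}$), the first Bianchi identity, and a second covariant derivative of \eqref{eq:tanno}, the tensor $R-R_0$ (with $R_0$ denoting the right-hand side above) annihilates $\nabla f$ in sufficiently many independent combinations to force $R=R_0$ on the open dense set $\{df\neq 0\}$. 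By continuity, $R=R_0$ on all of $M$.

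At this point $(M,g,J)$ is a closed pseudo-K\"ahler manifold of constant holomorphic sectional curvature $1$. Its simply connected model is one of the pseudo-projective spaces $\mathbb{C}P(p,q)$ with $p+q=n$, and these are compact only when the signature is definite (i.e.\ $q=0$). Hence $g$ is positive definite, and the universal cover of $M$ is biholomorphically isometric to $(\mathbb{C}P(n),g_{FS},J_{\mathrm{standard}})$, giving the desired finite cover of $(M,4cg,J)$. The main obstacle is the derivation of constant holomorphic sectional curvature in the previous step: the combinatorics of K\"ahler index symmetries together with the need to differentiate \eqref{eq:tanno} a second time make the algebra delicate, and one must verify that the combinations extracted are non-degenerate on $\{df\neq 0\}$. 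A subtler point, genuinely specific to the pseudo-Riemannian setting, is the compactness argument ruling out indefinite signature in the pseudo-projective models.
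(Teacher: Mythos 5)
Your handling of $c=0$ and the rescaling coincide with the paper's Section 2 and are fine. The genuine gap is in your central step. Commuting the last two indices of $f_{,ijk}$ via the Ricci identity and substituting \eqref{eq:tanno} gives only $(R-R_0)^{l}{}_{ijk}f_{,l}=0$, i.e.\ the tensor $R-R_0$ annihilates the \emph{single} vector $\nabla f$ in one slot; no combination of the K\"ahler symmetries, the Bianchi identity, or a further covariant derivative of \eqref{eq:tanno} upgrades this to $R=R_0$ pointwise, because that conclusion is simply false as a local statement. The computation you propose is entirely local, and applied verbatim to the Riemannian analogue \eqref{eq:GallotTanno} it would ``prove'' that any metric admitting a non-constant solution has constant sectional curvature near $\{df\neq 0\}$; but a non-constant solution of \eqref{eq:GallotTanno} is (locally) equivalent to decomposability of the cone over $(M,g)$, and decomposable cones over non--constant-curvature bases exist in abundance (e.g.\ doubly warped joins $dr^2+\cos^2 r\,g_1+\sin^2 r\,g_2$, whose cone is the product of the cones over $(N_1,g_1)$ and $(N_2,g_2)$). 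The analogous local flexibility in the K\"ahler case is visible in the paper's Remark on holomorph-projective equivalence: the first equation of \eqref{eq:system} has many solutions on metrics far from constant holomorphic sectional curvature. So compactness of $M$ must be used long before your final classification step, and your outline gives no mechanism for it to enter.

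This is precisely why the paper takes a different route: it never derives constant holomorphic sectional curvature directly. Instead it converts $f$ into a solution of the Frobenius system \eqref{eq:system}, packages it into the $(1,1)$-tensor $L(f)$ on $\mathbb{R}^2\times M$, shows that real polynomials in $L(f)$ again come from solutions (Lemma \ref{lem:product}), and uses closedness of $M$ three times --- vanishing at a point propagates globally (Lemma \ref{lem:frobenius}), the minimal polynomial and hence the eigenvalues of $L(f)$ are constant, and $\mu$ attains its maximum and minimum --- to manufacture a solution for which $L(f)$ is a nontrivial projector. Positive definiteness of $g$ is then read off from the sign of the Hessian of $\mu$ at its extremal points restricted to the eigenspaces of $a^i_j$ (Theorem \ref{pos}), after which Tanno's positive-definite theorem is invoked as a black box. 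A secondary gap in your ending: even granting constant holomorphic sectional curvature, observing that the indefinite models $\mathbb{C}P(p,q)$ are non-compact does not rule out \emph{closed quotients} of them (compare closed hyperbolic manifolds, which are quotients of a non-compact model), so your signature argument would still need the substance of Theorem \ref{pos}.
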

\begin{rem}
In particular we obtain that on a closed, connected Kähler manifold $(M,g,J)$ such that $Cg$ is not positively definite for any constant $C\neq 0$, any solution of \eqref{eq:tanno} is necessarily a constant. If we assume in addition lightlike completeness, there is a simple and short proof of this statement. Indeed, let $f$ be a solution of equation \eqref{eq:tanno} and let $\gamma:\mathbb{R}\rightarrow M$ be a lightlike geodesic. Restricting \eqref{eq:tanno} to $\gamma$ yields the ordinary differential equation $f'''(t)=0$, where $f(t)=f(\gamma(t))$. The general solution $f(t)=At^{2}+Bt+C$ must have vanishing constants $A$ and $B$ since $\gamma$ is complete and $f$ is bounded on the closed manifold $M$. We obtain that each solution $f$ of equation \eqref{eq:tanno} is constant along any lightlike geodesic. Since two arbitrary points can be connected by a broken lightlike geodesic, we obtain that all solutions of \eqref{eq:tanno} are constant.
Theorem \ref{thm:main} generalises this result to the case of a closed, connected lightlike incomplete manifold.
\end{rem} 
Theorem \ref{thm:main} was proven in~\cite{Tanno1978} for positively definite $g$ and $c>0$. In this case it is sufficient to require that the manifold is complete. In ~\cite{HiramatuK} it was proven that the equation with positively definite $g$ on a closed manifold can not have non-constant solutions for $c\leq  0$.
\subsection{The Riemannian analogue: Gallot-Tanno equation}
Let us recall that the equation (\ref{eq:tanno}) was introduced  in \cite{Tanno1978} as ``Kählerization'' of 
\begin{align}
f_{,ijk}+c(2f_{,k} \cdot g_{ij}+ f_{,i} g_{j k} + f_{,j} g_{i k})=0\label{eq:GallotTanno}
\end{align}
As its Kählerian analogue \eqref{eq:tanno}, this equation has its origin in spectral geometry: Consider the Laplacian on the sphere $S^{n}$ of constant sectional curvature equal to $1$. Then the eigenfunctions corresponding to the second eigenvalue $-2(n+1)$ satisfy \eqref{eq:GallotTanno} with $c=1$, see \cite{Tanno1978,Gallot1979}. Moreover, this equation appeared independently in many other parts of differential geometry. It is known that a non-constant solution of \eqref{eq:GallotTanno} on a Riemannian manifold $(M,g)$ implies the decomposability of the cone $(M'=\mathbb{R}_{>0}\times M,g'=dr^{2}+r^{2}g)$ (decomposability means that $(M',g')$ locally looks like a product manifold), see \cite{Gallot1979}. In \cite{Mounoud2010,Matveev2010} it was shown that this remains to be true in the pseudo-Riemannian situation for a cone over a closed manifold. Furthermore, equation \eqref{eq:GallotTanno} is related to conformal and projective differential geometry (see \cite{Hiramatu,Tanno1978} and \cite{Matveev2010,Mounoud2010} for references).  
The classical result dealing with equation \eqref{eq:GallotTanno} states that on a complete, connected Riemannian manifold, the existence of a non-constant solution of \eqref{eq:GallotTanno} with $c>0$ implies that the manifold can be covered by the euclidean sphere $S^{n}$. This was proven by Gallot and Tanno independently, see \cite{Tanno1978,Gallot1979}. Recently, this result was generalised to the pseudo-Riemannian situation under the additional assumption that the manifold is closed, see \cite{Mounoud2010,Cortes2009}. For non-closed manifolds, generalisations of the classical result of Gallot and Tanno for metrics with arbitrary signature where discussed in more detail in \cite{Cortes2009}. 
\subsection{Organisation of the paper}
The goal of this paper is to give a proof of Theorem \ref{thm:main}. 
In Section \ref{sec:c=0} we consider the case when the constant $c$ in the equation (\ref{eq:tanno}) is equal to zero. We show that there are no non-constant solutions of  (\ref{eq:tanno}) with $c=0$ on a closed, connected manifold. The relation between non-constant solutions of \eqref{eq:tanno} and holomorph-projective geometry (the Kählerian analogue of projective geometry) will be shown in Section \ref{sec:hpro}. Given a solution of (\ref{eq:tanno}), we can construct a solution of a linear PDE system which appears in the theory of holomorph-projectively equivalent Kähler metrics. Going a step further, in Section \ref{sec:proj} we asign to each solution of \eqref{eq:tanno} a $(1,1)$-tensor on $\widehat{M}:=\mathbb{R}^{2}\times M$. The family of tensors constructed in this way is invariant with respect to the operation of real polynomials on endomorphisms of the tangent bundle $T\widehat{M}$. This fact allows us to find special solutions of \eqref{eq:tanno} such that the corresponding $(1,1)$-tensor acts as a non-trivial projector on $T\widehat{M}$. Using such a solution, the final goal of this Section is to show that the metric $g$ is Riemannian up to multiplication with a constant. The last step in the proof of Theorem \ref{thm:main} is an application of the Theorem proven by Tanno for positive-definite $g$ and will be done in Section \ref{sec:proof}.
\section{The case when $c=0$}
\label{sec:c=0}
Let us now treat the case when the constant $c$ in the equation (\ref{eq:tanno}) is equal to zero. We show 
\begin{thm}
\label{thm:c=0}
Let $(M,g,J)$ be a closed, connected pseudo-Riemannian Kähler manifold and $f$ a solution of $f_{,ijk}=0$. Then $f$ is a constant. 
\end{thm}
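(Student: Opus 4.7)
The plan is to recognise that $f_{,ijk}=0$ is exactly the statement that the Hessian $H_{ij}:=f_{,ij}$ is a parallel symmetric $(0,2)$-tensor, and then to exploit the compactness of $M$ via the existence of global extrema. The K\"ahler structure $J$ will in fact play no role.

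Since $M$ is compact, $f$ attains its maximum at some $p\in M$ and its minimum at some $q\in M$; at both points $df=0$. Differentiating $f$ twice along any smooth curve through $p$ and using $df(p)=0$ to kill the $\gamma''$-term shows $H_p(v,v)\le 0$ for every $v\in T_pM$; the same argument at $q$ yields $H_q(w,w)\ge 0$ for every $w\in T_qM$. Next I would connect $p$ to $q$ by a smooth path (which exists by connectedness) and use parallel transport $\tau\colon T_pM\to T_qM$. Because $H$ is parallel, one has $H_q(\tau v,\tau v)=H_p(v,v)$, and since $\tau$ is a bijection, the quadratic form associated with $H_q$ is simultaneously nonpositive and nonnegative on $T_qM$. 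Hence $H_q(w,w)=0$ for every $w$, and by polarisation $H_q=0$; parallelism then propagates this to $H\equiv 0$ on all of $M$.

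Finally, $\nabla(\nabla f)=H\equiv 0$ says that the gradient $\nabla f$ is a parallel vector field, and since it vanishes at $p$, connectedness of $M$ forces $\nabla f\equiv 0$, so $f$ is constant. I do not expect any genuine obstacle in carrying this out; the only mildly non-routine point — and the step I would emphasise — is the observation that parallel transport of a parallel $(0,2)$-tensor preserves the numerical values of the associated quadratic form, which is what allows the sign information coming from the maximum and from the minimum to collide inside a single tangent space and thereby force $H$ to vanish.
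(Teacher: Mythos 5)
Your proposal is correct and follows essentially the same route as the paper: extrema of $f$ give sign conditions on the parallel Hessian at two points, parallel transport forces the Hessian to vanish, and then the parallel gradient vanishing at one point must vanish identically. You merely spell out the parallel-transport step that the paper leaves implicit.
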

\begin{proof}
Let $p$ and $q$ be points of $M$ where $f$ takes its maximum and minimum value respectively. Then $f_{,i}(p)=f_{,i}(q)=0$ and the hessian satisfies $f_{,ij}(p)\leq 0$ and $f_{,ij}(q)\geq 0$. By assumption $f_{,ij}$ is parallel. It follows that $f_{,ij}=0$ implying that $f_{,i}$ is parallel. Since $f_{,i}(p)=$ we obtain that $f_{,i}=0$ on the whole of $M$, hence $f$ is a constant.
\end{proof}

\section{Solutions of (\ref{eq:tanno}) correspond to solutions of Frobenius system}
\label{sec:hpro}
 Let $f$ be a non-constant solution of equation~\eqref{eq:tanno} on a closed, connected manifold $M$. By Theorem \ref{thm:c=0} we obtain $c\ne 0$. The constant $c$ can therefore be included in the metric $g$ without changing the Levi-Civita connection. For simplicity, we denote the new metric again with the symbol $g$, hence $f$ satisfies equation (\ref{eq:tanno}) with $c=1$. 
Consider the symmetric $(0,2)$-tensor $a_{ij}$ and the function $\mu$ defined by 
\begin{gather} 
  a_{ij}:=-f_{,ij}-2fg_{ij}\mbox{ and }\mu:=-2f\label{eq:def}
\end{gather}
Then we obtain that the following linear system of PDE`s is satisfied:
  \begin{gather}
    \label{eq:system}
    \begin{array}{c}
      a_{i j,k}=f_i g_{j k} + f_j g_{i k}-\bar{f}_i J_{jk}-\bar{f}_j J_{ik}\\
      f_{i,j}=\mu g_{i j}-a_{i j},\\
      \mu_{,i}=-2f_i
    \end{array}
  \end{gather} 
Indeed,  covariantly differentiating $a_{ij}=-f_{,ij}-2 f g_{ij}$ and substituting 
\eqref{eq:tanno} yields  
\begin{align}
  a_{ij,k}&=-f_{,ijk}-2 f_{,k} g_{ij}=2 f_{,k} \cdot g_{ij}+ f_{,i} g_{j k} +
    f_{,j} g_{i k} - \bar{f}_{,i}J_{jk}-\bar{f}_{,j}J_{ik} -2 f_{,k}
    g_{ij}\nonumber\\
    &=f_{,i} g_{j k}+f_{,j} g_{i k}-\bar{f}_{,i}J_{jk}-\bar{f}_{,j}J_{ik},\nonumber
\end{align} 
which is the first equation in \eqref{eq:system}. The second and third equations of \eqref{eq:system} are equivalent to the definition \eqref{eq:def}.
\begin{rem}
\label{rem:hpro}
The first equation 
\begin{align}
 a_{i j,k}=f_i g_{j k} + f_j g_{i k}-\bar{f}_i J_{jk}-\bar{f}_j J_{ik}\label{eq:main}
\end{align}
in \eqref{eq:system} is the main equation in holomorph-projective geometry, see for example \cite{Sin2,DomMik1978} or \cite{Mikes} for a survey on this topic. Two Kähler metrics $g$ and $\bar{g}$ are said to be \textit{holomorph-projectively equivalent} if their holomorphically-planar curves coincide. Recall that a curve $\gamma:I\rightarrow M$ is called \textit{holomorphically-planar} with respect to the Kähler metric $g$ if there are functions $\alpha,\beta:I\rightarrow\mathbb{R}$ such that $\nabla_{\dot{\gamma}}\dot{\gamma}=\alpha \dot{\gamma}+\beta J\dot{\gamma}$ is satisfied, see \cite{Otsuki1954,Tashiro1956}. The importance of equation \eqref{eq:main} is due to the fact, that the solutions $(a_{ij},f_{i})$ of \eqref{eq:main}, such that $a_{ij}$ is symmetric and hermitian, are in $1:1$ correspondence with metrics $\bar{g}$ beeing holomorph-projectively equivalent to $g$. 
\end{rem}
\begin{rem}
\label{rem:hpro2}
We want to mention that the $1$-form $f_{i}$ corresponding to a solution $(a_{ij},f_{i})$ of \eqref{eq:main} always is the differential of a function. Indeed, contracting equation \eqref{eq:main} with $g^{ij}$ shows that $f_{i}=\frac{1}{4}a^{\alpha}_{\alpha,i}$, in particular $f_{i,j}=f_{j,i}$.
\end{rem}
Let us note that the construction \eqref{eq:def} is invertible: If there is a solution $(a_{ij},f_{i},\mu)$ of (\ref{eq:system}) we can use the equations in (\ref{eq:system}) to show that $\mu$ always constitutes a solution of equation (\ref{eq:tanno}) with $c=1$. Indeed,
\begin{align}
\mu_{,ijk}&\overset{\mbox{\tiny\eqref{eq:system}}}{=}\nabla_{k}(-2f_{i,j})\overset{\mbox{\tiny\eqref{eq:system}}}{=}\nabla_{k}(-2\mu \tilde{g}_{ij}+2a_{ij})\nonumber\\
&=-2\mu_{,k} \tilde{g}_{ij}+2a_{ij,k}\overset{\mbox{\tiny\eqref{eq:system}}}{=}-2\mu_{,k} \tilde{g}_{ij}+2f_{i}\tilde{g}_{ik}+2f_{j}\tilde{g}_{jk}-2\bar{f}_{i}\tilde{J}_{jk}-2\bar{f}_{j}\tilde{J}_{ik}\nonumber\\
&\overset{\mbox{\tiny\eqref{eq:system}}}{=}-2\mu_{,k} \tilde{g}_{ij}-\mu_{,i}\tilde{g}_{ik}-\mu_{,j}\tilde{g}_{jk}+\bar{\mu}_{,i}\tilde{J}_{jk}+\bar{\mu}_{,j}\tilde{J}_{ik}\nonumber
\end{align}
Hence, given a solution $(a_{ij},f_{i},\mu)$ of \eqref{eq:system} we can define $f:=-\frac{1}{2}\mu$ which gives a solution of \eqref{eq:tanno}. This definition is the inverse construction to \eqref{eq:def} and therefore, the solutions $f$ of \eqref{eq:tanno} and $(a_{ij},f_{i},\mu)$ of \eqref{eq:system} are in linear $1:1$ correspondence.\\
Let us now show the main advantage of working with a system like (\ref{eq:system}):
\begin{lem}
  \label{lem:frobenius}
  Let $(a_{ij},f_i,\mu)$ be a solution of the
  system~(\ref{eq:system}) such that $a_{ij}=0$, $f_i=0$,
  $\mu=0$ at some point $p$ of the connected Kähler manifold
  $(M,g,J)$. Then $a_{ij}\equiv 0$, $f_i\equiv 0$, $\mu\equiv 0$ at all
  points of $M$. 
\end{lem}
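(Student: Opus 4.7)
The plan is to recognize the system~(\ref{eq:system}) as a closed linear first-order PDE system of Frobenius (parallel-transport) type in the unknowns $(a_{ij}, f_i, \mu)$, and then use the standard ODE uniqueness argument along curves. More precisely, each of the three equations in~(\ref{eq:system}) expresses a covariant derivative of one of the unknowns as a $C^\infty(M)$-linear combination of the unknowns themselves, with coefficients built only from $g$ and $J$. This structural observation is the whole content that powers the proof.

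Given this, I would argue as follows. Let $q \in M$ be an arbitrary point. Since $M$ is connected (hence path-connected, as a manifold), pick a smooth curve $\gamma:[0,1]\to M$ with $\gamma(0)=p$ and $\gamma(1)=q$. Choose a parallel frame $\{e_{(1)},\dots,e_{(2n)}\}$ along $\gamma$, and let $A_{ij}(t)$, $F_i(t)$, $M(t)$ denote the components of $a$, $f_i$, $\mu$ pulled back along $\gamma$ and expressed in this frame. Contracting each equation of~(\ref{eq:system}) with $\dot\gamma^k$ and using the fact that the frame is parallel (so covariant derivatives along $\gamma$ become ordinary $t$-derivatives of the components), the system reduces to a linear ODE system of the form
\begin{align*}
\frac{d}{dt}\begin{pmatrix} A \\ F \\ M \end{pmatrix} = L(t)\begin{pmatrix} A \\ F \\ M \end{pmatrix},
\end{align*}
where $L(t)$ is a continuous matrix-valued function of $t$ depending only on $g$, $J$ and $\dot\gamma$ evaluated along $\gamma$ (and on the bookkeeping given by the parallel frame, in particular the components of $J^\alpha_i$ enter into the $\bar f$-terms).

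With the initial data $A(0)=0$, $F(0)=0$, $M(0)=0$, the zero function is visibly a solution of this linear ODE, so by the uniqueness theorem for linear ODEs it is the unique solution. Hence $a_{ij}$, $f_i$ and $\mu$ all vanish at $\gamma(1)=q$. Since $q$ was arbitrary, the conclusion follows.

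The argument has essentially no obstacle — the only step requiring a small amount of care is the verification that~(\ref{eq:system}) really is closed under $\nabla$ in the unknowns (there is no hidden derivative of an unknown that is not already expressed by the system), so that the pullback along $\gamma$ yields a genuine linear ODE rather than something of higher order or with inhomogeneous terms. Once this is noted, the rest is a textbook application of ODE uniqueness together with the path-connectedness of $M$.
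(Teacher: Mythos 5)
Your proposal is correct and follows essentially the same route as the paper: the authors simply observe that the system is in Frobenius form and invoke the standard fact that vanishing at one point propagates everywhere, while you spell out the usual justification of that fact (restriction to a curve in a parallel frame, reduction to a linear ODE, and uniqueness with zero initial data). No gap; yours is just the fully written-out version of the paper's one-line argument.
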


\begin{proof}
  The system \eqref{eq:system} is in Frobenius form, i.e., the
  derivatives of the unknowns $a_{ij}, f_i, \mu$ are expressed
  as (linear) functions of the unknowns:
  $$
  \left(\begin{array}{c}a_{ij,k}\\f_{i,j}\\\mu_{,i}\end{array}\right)
  =F\left(\begin{array}{c}a_{ij}\\f_{i}\\\mu\end{array}\right)\nonumber,
  $$
  and all linear systems in the Frobenius form have the property that
  the vanishing of the solution at one point implies the vanishing at all
  points.
\end{proof}

\section{If there is a non-constant solution of \eqref{eq:tanno} with $c=1$, then $g$ is positively definite}
\label{sec:proj}
The goal of this section is to prove the following
\begin{thm}
  \label{pos}
  Let $(M, g, J)$ be a closed, connected Kähler manifold. Suppose $f$ is a non-constant solution of \eqref{eq:tanno} with $c=1$. Then, the metric $g$ is positively definite.
\end{thm}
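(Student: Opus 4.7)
The plan follows the roadmap announced in the introduction. By Lemma~\ref{lem:frobenius}, the space $V$ of solutions of the Frobenius system~\eqref{eq:system} is a finite-dimensional real vector space, since a solution is determined by its initial values at any point. To every $s=(a_{ij},f_{i},\mu)\in V$ I will associate a $(1,1)$-tensor $A_{s}$ on $\widehat M=\R^{2}\times M$ whose blocks naturally encode $a^{i}{}_{j}$, $f_{i}$, $\bar f_{i}$ and $\mu$ (together with a companion scalar that Frobenius forces on the $\R^{2}$-factor). The extension is chosen so that $\widehat M$ carries a pseudo-K\"ahler structure $(\hat g,\hat J)$ with respect to which the Frobenius equations~\eqref{eq:system} become a covariant constancy condition for $A_{s}$; this is the K\"ahlerian analogue of the cone construction used in the Gallot--Tanno case \cite{Mounoud2010,Cortes2009}.

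The crucial algebraic property to verify next is that the family $\{A_{s}\}_{s\in V}$ is closed under real polynomial functional calculus of endomorphisms of $T\widehat M$: for every $p\in\R[t]$ and every $s\in V$ one has $p(A_{s})=A_{s'}$ for some $s'\in V$. This is the invariance announced in the introduction and endows $V$ with the structure of a commutative $\R$-algebra.

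Using this polynomial invariance I then produce a non-trivial projector among the $A_{s}$. Starting with $A=A_{s_{0}}$ corresponding to the given non-constant $f$ (so $A\ne 0$), consider its minimal polynomial $m(t)$ over $\R$; after a constant shift one may assume $0$ is a root, and the real primary decomposition of $m$ yields a polynomial $p$ with $B:=p(A)$ satisfying $B^{2}=B$ and $B\ne 0,\Id$. By the previous paragraph, $B=A_{s_{1}}$ for some $s_{1}\in V$. Such a projector induces a $\hat g$-orthogonal, $\hat J$-invariant splitting of $T\widehat M$ into two proper parallel sub-bundles, so $(\widehat M,\hat g,\hat J)$ locally decomposes as a product of two pseudo-K\"ahler factors.

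To pass from this splitting to positivity of $g$, I trace the decomposition back down to $M$ and exploit compactness. The $(1,1)$-tensor $B$ projects the original data $(a_{ij},f_{i},\mu)$ onto one of the factors and thereby produces, via the inverse of the construction~\eqref{eq:def}, a new solution of~\eqref{eq:tanno}. If the signature of $g$ were indefinite, one of these projected solutions would be supported on a proper $\hat J$-invariant sub-bundle on which $\hat g$ is degenerate or has smaller rank, which (combined with the ODE $f'''=0$ along lightlike geodesics of $g$ noted in the remark following Theorem~\ref{thm:main} and the boundedness of $f$ on the closed manifold $M$) forces either the projected solution or its complement to be trivial. Iterating this argument on the remaining factor eventually contradicts non-constancy of $f$ unless $g$ is already definite; replacing $g$ by $-g$ if necessary (absorbed in the sign of $c$) gives the positive-definite conclusion.

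\emph{Main obstacle.} The genuine difficulty is in the first two steps: pinning down the correct pseudo-K\"ahler extension $(\widehat M,\hat g,\hat J)$ and the precise block form of $A_{s}$ so that both the encoding of~\eqref{eq:system} as a parallelism condition and the polynomial invariance $p(A_{s})\in\{A_{s'}\}$ hold simultaneously. Once those algebraic facts are in place, the production of a non-trivial projector is routine linear algebra and the reduction to definite signature is a short geometric argument using compactness of $M$.
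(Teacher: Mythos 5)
Your overall skeleton (extend the data to a $(1,1)$-tensor on $\widehat M=\R^2\times M$, establish polynomial invariance, extract a non-trivial projector, then read off the signature) matches the paper's, but three of your steps have genuine problems. First, you assert that $\widehat M$ carries a pseudo-K\"ahler metric for which the system~\eqref{eq:system} becomes covariant constancy of $A_s$, so that the projector yields parallel sub-bundles and a local product splitting. The paper explicitly warns (last remark of Section~\ref{99}) that, unlike in the Gallot--Tanno cone construction, the operator $L(f)$ is \emph{not} covariantly constant for the induced connection; this is precisely the ``additional difficulty'' of the K\"ahler case, so the parallel decomposition of $\widehat M$ you build on is not available. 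Second, your production of a non-trivial projector from the real primary decomposition of the minimal polynomial is incomplete: if the minimal polynomial is a power of a single irreducible real factor (e.g.\ $(t-\lambda)^k$ or $(t^2+bt+c)^k$), the functional calculus yields only $0$ and $\Id$. The paper closes this gap with Lemma~\ref{lem:two_eigenvalues}: compactness gives extremal points of $\mu$, where $f_i=0$ and hence $\mu_{\max}$ and $\mu_{\min}$ are real eigenvalues of $L(f)$ (constant over $\widehat M$ by Lemma~\ref{lem:minim}), distinct because $\mu$ is non-constant. You need this, or an equivalent, before any projector argument can start.

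Third, and most seriously, your endgame relies on the ODE $f'''=0$ along lightlike geodesics together with boundedness of $f$. As the remark after Theorem~\ref{thm:main} states, that argument needs lightlike \emph{completeness}, which closed pseudo-Riemannian manifolds do not automatically possess; removing that hypothesis is the entire point of the theorem, so invoking it here is circular. The paper's actual signature argument is different and elementary: it writes $\mu_{,ij}=2a_{ij}-2\mu g_{ij}$, restricts to the eigenspace $E_0$ of $a^i_j$ at a maximum point of $\mu$ (where the Hessian is non-positive and $a$ vanishes on $E_0$) to get that $g$ is positively definite on $E_0$ there, does the symmetric computation on $E_1$ at a minimum point, and then propagates positive definiteness of $g$ on the distributions $E_1^\perp$ and $E_0^\perp$ along paths avoiding the critical levels of $\mu$, using that a non-degenerate metric cannot change signature continuously. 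Your last paragraph would need to be replaced by an argument of this kind.
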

First we want to show that it is possible to choose one solution of equation (\ref{eq:tanno}) such that the corresponding $(1,1)$-tensor $a^i_j=g^{i\alpha}a_{\alpha j}=-f^{i}_{,j}-2f\delta^{i}_{j}$ has a clear and simple structure of eigenspaces and eigenvectors. Evaluating $a_{ij},f_{,ij}$ and $g_{ij}$ on these eigenspaces will show that $g$ has to be positively definite. 

\subsection{Matrix of the extended system} \label{99}
In order to find the special solution of (\ref{eq:tanno}) mentioned above, we asign to each solution $f$ of (\ref{eq:tanno}) a  $(1,1)$-tensor on the $(2n+2)$-dimensional manifold $\widehat{M}=\R^2\times M$ with coordinates $(\underbrace{x_+,x_-}_{\R^2},\underbrace{x_1,\dots,x_{2n}}_{M})$. For every solution $f$ of equation (\ref{eq:tanno}), let us consider the $(2n+2)\times(2n+2)$-matrix
\begin{gather}
  \label{eq:operator}
  L(f)=\left(
    \begin{array}{cc|ccc}
      \mu&0&f_{1}&\dots&f_{2n}\\
      0&\mu&\bar{f}_{1}&\dots&\bar{f}_{2n}\\
      \hline
      f^{1}&\bar{f}^{1}&&&\\
      \vdots&\vdots&&a^i_j&\\
      f^{2n}&\bar{f}^{2n}&&&
    \end{array}\right)
\end{gather}
where $a_{ij}$ and $\mu$ are defined by the equations \eqref{eq:def}. The matrix $L(f)$ is a well-defined $(1,1)$-tensor field on
$\widehat{M}$ (in the sense that after a change of coordinates on $M$ the components of the matrix $L$ transform
according to tensor rules).

\begin{rem}
  Using \eqref{eq:def} we see that the identity transformation of $T\widehat{M}$ corresponds to the function $f=-\frac{1}{2}$, i.e.
  \begin{gather}
\nonumber    
L\left(-\frac{1}{2}\right)=\left(
      \begin{array}{cc|ccc}
        1&0&0&\dots&0\\
        0&1&0&\dots&0\\
        \hline
        0&0&&&\\
        \vdots&\vdots&&\delta^i_j&\\
        0&0&&&
      \end{array}\right)={\bf 1}
  \end{gather}
\end{rem}
\begin{rem}
  The matrix $L$ contains the information on the function $f$ and its first and second derivatives. By equation \eqref{eq:def} and Lemma \ref{lem:frobenius}, if this matrix is vanishing at some point of $\widehat{M}$ then $f\equiv 0$ on the whole of $M$. In the next section,
  we will see that the matrix formalism does have advantages: we will
  show that the polynomials of the matrix $L$ also correspond to
  certain solutions of equation \eqref{eq:tanno}.
\end{rem}
\begin{rem}  
The constructions which where done here are visually similar to those which can be done for equation \eqref{eq:GallotTanno}. However, in the non-Kählerian situation the corresponding $(1,1)$-tensor on the cone manifold is covariantly constant with respect to the connection induced by the cone metric, see \cite{Gallot1979,Mounoud2010,Matveev2010}. This is not the case for the extended operator \eqref{eq:operator} which poses additional difficulties.
\end{rem}

\subsection{Algebraic properties of $L$}
Obviously, the mapping $f\longmapsto L(f)$ applied to arbitrary smooth functions $f$ on $M$ is a linear injective mapping between the space of smooth functions on $M$ and the space of $(1,1)$-tensors on $\widehat{M}$. For two smooth functions $F$ and $H$ on $M$ let us define a new product 
\begin{align}
F*H:=-2FH-\frac{1}{2}F_{,\alpha}H^{\alpha}_{,}\mbox{ and the $k$-fold potency }F^{*k}:=\underbrace{F*...*F}_{k\mbox{ \tiny times}},\nonumber
\end{align}
The product $*$ is bilinear and commuting but not associative. However, it turns out that the mapping $L$ now preserves the potencies $f^{*k}$ of solutions of \eqref{eq:tanno}:
\begin{lem}
  \label{lem:product}
\begin{enumerate}
  \item Let $f$ be a solution of equation \eqref{eq:tanno} with $c=1$. Then, for every $k\geq 0$ there exists a solution $\tilde{f}$ such that
  $$L^k(f)=L(\tilde {f}), \mbox{ where }L^k = \underbrace{L
    \cdot ... \cdot L}_\text{$k$ times}.$$
\item 
If $f$ is a solution of \eqref{eq:tanno} with $c=1$, then $L^{k}(f)=L(f^{*k})$. In particular, $f^{*k}$ is a solution of \eqref{eq:tanno}. 
\end{enumerate}
\end{lem}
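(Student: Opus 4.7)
The plan is to establish part (2) by induction on $k$; part (1) is then immediate. The cases $k = 0, 1$ are trivial (using $L(-1/2) = \Id$, with $-1/2$ serving as an identity for $*$, and $L(f)^{1} = L(f^{*1})$ by definition), so the core of the argument is the case $k = 2$, i.e.\ the identity $L(f)^2 = L(f*f)$. I would verify this by splitting $L(f)$ into the block decomposition $\begin{pmatrix} \mu\,I_2 & B \\ C & a \end{pmatrix}$, where $B$ has rows $(f_{,j})$ and $(\bar f_{,j})$ and $C$ has columns $(f^{,i})$ and $(\bar f^{,i})$, and computing each of the four resulting blocks of $L(f)^2$.

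For the upper-left block, the key point is that $BC$ is a scalar multiple of $I_2$: its off-diagonal entries $f_{,\alpha} \bar f^{,\alpha}$ and $\bar f_{,\alpha} f^{,\alpha}$ vanish because they contract the antisymmetric fully-raised Kähler form with the symmetric tensor $f_{,\alpha} f_{,\beta}$, while the diagonal entries both equal $f_{,\alpha} f^{,\alpha}$ (using $J^2 = -\Id$ and $J$-invariance of $g$). The resulting scalar $\mu^2 + f_{,\alpha} f^{,\alpha}$ is exactly $-2(f*f)$ by the definition of $*$, matching the scalar block of $L(f*f)$. For the off-diagonal blocks, one substitutes $f^{,\alpha}_{,j} = \mu\,\delta^\alpha_j - a^\alpha_j$ from the second equation of \eqref{eq:system} into the derivative of $f*f$ and finds $(f*f)_{,j} = \mu f_{,j} + f_{,\alpha} a^\alpha_j$, agreeing with the first row of the upper-right block; the bar-rows and bar-columns follow from the hermitian compatibility $aJ = Ja$. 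The remaining lower-right block reduces to the identity
\[
a^i_\alpha a^\alpha_j + f^{,i} f_{,j} + \bar f^{,i} \bar f_{,j} = -(f*f)^{,i}_{,j} - 2(f*f)\,\delta^i_j,
\]
obtained by differentiating $(f*f)_{,j}$ once more and substituting all three equations of \eqref{eq:system}, in particular the formula for $a^\alpha_{j,k}$ from the first equation. Having established $L(f)^2 = L(f*f)$, the inverse construction from Section \ref{sec:hpro} (producing a Tanno solution from any solution of the Frobenius system \eqref{eq:system}) forces $f*f$ itself to solve \eqref{eq:tanno}.

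For the inductive step I would polarize the base identity. Linearity of $L$, bilinearity and commutativity of $*$, and linearity of \eqref{eq:tanno} together yield, after expanding $L(f+g)^2 = L((f+g)*(f+g))$ for two Tanno solutions $f$ and $g$, the relation
\[
L(f)\, L(g) + L(g)\, L(f) = 2\, L(f*g).
\]
Assuming $L(f)^{k-1} = L(f^{*(k-1)})$ with $f^{*(k-1)}$ a Tanno solution, the operators $L(f)$ and $L(f^{*(k-1)}) = L(f)^{k-1}$ commute as powers of a common operator, so the polarized identity collapses to $L(f)\,L(f^{*(k-1)}) = L(f * f^{*(k-1)}) = L(f^{*k})$, which equals $L(f)^k$; a further application of the inverse construction shows $f^{*k}$ is again a Tanno solution, closing the induction on both parts. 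The main obstacle is the lower-right block computation at $k = 2$: matching the composition $a^2$ together with the rank-one pieces $f \otimes f$ and $\bar f \otimes \bar f$ against the Hessian-type tensor extracted from $f*f$ requires careful tracking of cancellations among Kähler contractions, and rests on the hermitian compatibility of $a$ with $J$ as the essential algebraic ingredient.
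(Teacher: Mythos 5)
Your overall strategy --- proving the diagonal identity $L(f)^2=L(f*f)$ by a direct block computation and then recovering the general case by polarization ($L(f)L(g)+L(g)L(f)=2L(f*g)$ for two solutions, followed by the remark that $L(f)$ commutes with its own powers) --- is a genuinely different organization from the paper's. The paper computes $L(f)\cdot L(F)$ for two arbitrary solutions, isolates the compatibility conditions \eqref{eq:op_eq} under which the product is again of the form $L(\tilde f)$, and then verifies those conditions for $F$ with $L(F)=L^k(f)$ by a separate induction (the condition $f^\alpha\bar F_\alpha=0$ being the delicate one). In your diagonal case both conditions are automatic ($f^\alpha\bar f_\alpha=0$ by antisymmetry of the raised K\"ahler form contracted with $f_\alpha f_\beta$), which is what makes the polarization route attractive; the three block identities you list agree with the entries of the paper's formula \eqref{eq:product} specialized to $F=f$.

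There is, however, a genuine gap, and it sits at the load-bearing point of the induction: the claim that $f*f$ (hence $f*g$, hence $f^{*k}$) is again a solution of \eqref{eq:tanno}. Your block computations involve only $f*f$, its gradient and its Hessian, and so they establish the pointwise matrix identity $L(f)^2=L(f*f)$, where $L(f*f)$ is built from the function $f*f$ via \eqref{eq:def}. But the Tanno equation for $f*f$ is a third-order statement; it is equivalent to the \emph{first} equation of \eqref{eq:system} for the triple attached to $f*f$, and while the second and third equations of \eqref{eq:system} are automatic from \eqref{eq:def}, the first one is not. The inverse construction of Section \ref{sec:hpro} therefore cannot be invoked until that first equation has been checked, i.e.\ until you differentiate the lower-right block and verify
\[
\bigl(a_{i\alpha}a^\alpha_j+f_if_j+\bar f_i\bar f_j\bigr)_{,k}
=\tilde f_i g_{jk}+\tilde f_j g_{ik}-\bar{\tilde f}_i J_{jk}-\bar{\tilde f}_j J_{ik},
\qquad \tilde f_i=\mu f_i+a^\alpha_{\ i}f_\alpha,
\]
using \eqref{eq:system}. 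This is precisely the longest computation in the paper's proof and is where the hermitian compatibility of $a$ with $J$ is actually consumed. Without it your induction does not close, since the polarization step requires $f+g$ and $f^{*(k-1)}$ to be Tanno solutions before the base case can be applied to them. The gap is fillable by a computation of exactly the type you already carry out, but as written the proposal omits it.
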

\begin{proof}
  $(1)$ Given two solutions $f$ and $F$ of equation \eqref{eq:tanno}, we denote by $(a_{ij},f_i,\mu)$ and $(A_{ij},F_i,\Mu )$ the corresponding solutions of \eqref{eq:system}, constructed by \eqref{eq:def}. After direct calculation we obtain for the product of the corresponding matrices $L(f)$ and $L(F)$:
 \begin{multline}
 \label{eq:product}
 L(f)\cdot L(F)=\left(
 \begin{array}{cc|ccc}
 \mu \Mu +f_k F^k&f_k \bar{F}^k&\mu
 F_{1}+f_k A^k_1&\dots&\mu F_{2n}+f_k
 A^k_{2n}\\
 \bar{f}_k F^k&\mu \Mu +f_k F^k&\mu
 \bar F_{1}+\bar f_k
 A^k_1&\dots&\mu\bar{F}_{2n}+\bar{f}_k A^k_{2n}\\
 \hline
 \vphantom{\dfrac{1}{2}}\Mu f^{1}+a^1_kF^k&\Mu \bar{f}^{1}+a^1_k\bar{F}^k&&&\\
 \vdots&\vdots&\multicolumn{3}{c}{a^i_k
 A^k_j+f^i F_j+\bar{f}^i\bar{F}_j}\\
 \Mu f^{2n}+a^{2n}_k F^k&
 \Mu \bar{f}^{2n}+a^{2n}_k\bar{F}^k&&&
 \end{array}\right)
 \end{multline}
 Suppose that
 \begin{gather}
 \label{eq:op_eq}
 \mu F_j+f_k A^k_j= \Mu f_j+a^k_j F_k\quad\mbox{and}\quad f^k \bar{F}_k=0
 \end{gather}
 then $L(f)\cdot L(F)$ takes the form \eqref{eq:operator}
 \begin{gather}
 \label{eq:product_sol}
 L(f)\cdot L(F)=\left(
    \begin{array}{cc|ccc}
      \tilde{\mu}&0&\tilde{f}_{1}&\dots&\tilde{f}_{2n}\\
      0&\tilde{\mu}&\bar{\tilde{f}}_{1}&\dots&\bar{\tilde{f}}_{2n}\\
      \hline
      \tilde{f}^{1}&\bar{\tilde{f}}^{1}&&&\\
      \vdots&\vdots&&\tilde{a}^i_j&\\
      \tilde{f}^{2n}&\bar{\tilde{f}}^{2n}&&&
    \end{array}\right)
\end{gather}
 where $\tilde a^{i}_{j}=a^i_k
 A^k_j+f^i F_j+\bar{f}^i\bar{F}_j,\tilde {f}_i=\mu F_i+ A^k_i f_k$ and $\tilde\mu=\mu \Mu +f_k F^k$. Now we show that $\tilde {a}_{ij}$, $\tilde {f}_i$ and $\tilde \mu$
 satisfy~(\ref{eq:system}). In addition, we show that $L(f)\cdot L(F)$ is self-adjoint.
 Let us check the first equation of~\eqref{eq:system}:
 \begin{multline}
\nonumber 
\tilde a_{ij,k}=(a_{is}
 A^s_j+f_i F_j+\bar{f}_i\bar{F}_j)_{,k}=
 a_{is,k}A^s_j+a^s_i
 A_{sj,k}+f_{i,k}F_j+f_i F_{j,k}+
 \bar{f}_{i,k}\bar{F}_j+\bar{f}_i\bar{F}_{j,k}
\\
  \stackrel{(\ref{eq:system})}{=}A^s_j f_i g_{sk}+A^s_j f_s g_{ik}+
 A^s_j \bar {f}_i J^{s'}_{\ \ s}g_{s' k}+A^s_j \bar{f}_s
 J^{\i}_{\ \ i}g_{\i k}+a^s_i F_j g_{sk}+a^s_i F_s g_{jk}+
 a^s_i \bar{F}_j J^{s'}_{\ \ s}g_{s' k}+a^s_i \bar{f}_s
 J^{\j}_{\ \ j}g_{\j k}\\
+\mu g_{ik} F_j-a_{ik}F_j+\Mu
 g_{jk}f_i-A_{jk}f_i+
 \mu J^{\i}_{\ \ i} g_{\i k} \bar{F}_j-J^{\i}_{\ \ i}
 a_{\i k}\bar{F}_j+\Mu J^{\j}_{\ \
 j}g_{\j k}\bar{f}_i-J^{\j}_{\ \ j}A_{\j k}\bar{f}_i\\
 =g_{ik}(f_s A^s_j+\mu F_j)+g_{jk}(F_s
 a^s_i+\Mu f_i)+
 J^{\i}_{\ \ i}g_{\i k}(\bar{f}_s A^s_j+\mu \bar{F}_j)+
 J^{\j}_{\ \ j}g_{\j k}(\bar{F}_s a^s_i+\Mu\bar{f}_i)
\\
  \stackrel{\eqref{eq:op_eq}}{=}\tilde{f}_i g_{j k} + \tilde{f}_j g_{i k}-J^{\alpha}_{i}\tilde{f}_\alpha J_{j k} - J^{\alpha}_{j}\tilde{f}_\alpha J_{i k}
 \end{multline}
 For the second equation one can calculate:
 \begin{multline}
\nonumber 
\tilde {f}_{i,k}=(\mu{F}_i+f_j A^j_i)_{,
 k}=\mu_{,k}F_i+\mu F_{i, k}+ f_{j,k}
 A^j_i+f^j A_{ij,k}\\
 \stackrel{(\ref{eq:system})}{=}-2f_k F_i+\mu\Mu g_{ik}-\mu A_{ik}+\mu A_{ik}
 -a_{jk}A^j_i+ f^j F_i g_{j k} + f^j F_j
 g_{i k}+f^j J^\j_{\ \ j}J^\i_{\ \ i}F_\i g_{\j k} +
 f^j J^\j_{\ \ j}J^\i_{\ \ i}F_\j g_{\i k}\\
 =(\mu \Mu+f^j F_j)g_{ik}-(f_k F_i
 +\bar{f}_k \bar{F}_i+A_{ij}
 a^j_k)+f^j\bar{F}_j J^\i_{\ \ i}g_{\i
 k}\stackrel{\eqref{eq:op_eq}}{=}
 \tilde \mu g_{ki}-\tilde a_{ki}
 \end{multline}
As we already stated in Remark \ref{rem:hpro2}, since $(\tilde{a}_{ij},\tilde{f}_{i})$ satisfies \eqref{eq:main}, the $1$-form $\tilde{f}_{i}$ is the differential of a function. It follows that $\tilde{f}_{i,j}$ is symmetric and from the last calculation we obtain that $\tilde a_{ij}$ is symmetric since it is the linear combination of two symmetric tensors. Let us now check the third equation
 of~\eqref{eq:system}:
 \begin{multline}
\nonumber 
\tilde \mu_{, i}=(\mu \Mu +f_k F^k)_{, i}=
 \mu_{, i} \Mu + \mu \Mu_{, i} +f_{k, i} F^k+f^k
 F_{k, i}\\
 \stackrel{(\ref{eq:system})}{=}-2f_i \Mu
 -2F_i \mu +F^k (\mu g_{ik}-a_{ik})+f^k (\Mu g_{ik}-A_{ik})=-(\mu F_i+f_k
 A^k_i)-(\Mu f_i+F_k
 a^k_i)\stackrel{\eqref{eq:op_eq}}{=}-2\tilde f_i
 \end{multline}
 Thus, $(\tilde{a}_{ij},\tilde{f}_{i},\tilde \mu)$ is a solution of~\eqref{eq:system}.\\
Now we show that the operator
 $L(F)=L^k(f)$ satisfies the conditions~\eqref{eq:op_eq}.
Since $L^{k}\cdot L=L\cdot L^{k}$, using~(\ref{eq:product}) we obtain
 $$\mu F_j+f_k A^k_j= \Mu f_j+a^k_j F_k$$
The last condition will be checked by induction. Suppose $f^i \bar {F}_i=0$ then
 $$f^i J^\i_{\ \ i}\widetilde{F}_\i=f^i \cdot J^\i_{\
 \ i}(\mu F_\i+f_k A^k_\i)=\mu\cdot 0 + f^k
 (J_{\ \ i}^\i A_{k \i})f^i=0.$$
From the constructions in \eqref{eq:def}, it is obvious that $L(\tilde{f})=L^{k+1}(f)$ where $\tilde{f}=-\frac{1}{2}\tilde{\mu}$. This completes the proof of the first part of Lemma \ref{lem:product}.\\
Part $(2)$ follows immediately from the already proven part: Proceeding by induction, we assume $L(f^{*k})=L^{k}(f)$. We already know from the first part of Lemma \ref{lem:product}, that $L^{k+1}(f)=L(f)\cdot L^{k}(f)=L(\tilde{f})$. The solution $\tilde{f}$ of \eqref{eq:tanno} is given by the formula $\tilde{f}=-\frac{1}{2}\tilde{\mu}=-2fF -\frac{1}{2}f_k F^k$, see the equations following formula \eqref{eq:product_sol}. By assumption $F=f^{*k}$ and therefore $\tilde{f}=f*f^{*k}=f^{*(k+1)}$.  
\end{proof}
Let us consider the natural operation of polynomials with real coefficients on elements of $T^{1}_{1}\widehat{M}$. From Lemma \ref{lem:product}, we immediately obtain that the family of $(1,1)$-tensors $L(f)$, constructed from solutions $f$ of \eqref{eq:tanno} is invariant with respect to this operation:
\begin{cor}
Let $f$ be a solution of \eqref{eq:tanno} with $c=1$ and $P(t)=c_k t^k+\dots+c_1 t+c_0$ an arbitrary polynomial with real coefficients. Then 
$P*(f):=c_k f^{*k}+c_{k-1} f^{*(k-1)}+\dots+c_1 f-\frac{1}{2}c_0$ is a solution of \eqref{eq:tanno} with $c=1$ and $L(P*(f))=P(L(f))$.
\end{cor}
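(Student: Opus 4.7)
The plan is to reduce the corollary to a direct consequence of Lemma \ref{lem:product}(2), exploiting the two linearity properties already available to us: the $\mathbb R$-linearity of the Tanno equation \eqref{eq:tanno} and the linearity of the assignment $f\mapsto L(f)$ noted at the beginning of Section \ref{99}.

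First I would verify that $P*(f)$ is itself a solution of \eqref{eq:tanno}. Since the equation is linear and homogeneous in $f$, its solution space is a real vector space; in particular it contains all constants (all derivatives vanish). By Lemma \ref{lem:product}(2) each $*$-power $f^{*j}$ with $j\ge 1$ is a solution. Hence the real linear combination
$$P*(f)=c_k f^{*k}+c_{k-1} f^{*(k-1)}+\dots+c_1 f-\frac{1}{2}c_0$$
is again a solution.

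Second I would establish the matrix identity $L(P*(f))=P(L(f))$. Using linearity of $L$,
$$L(P*(f))=c_k L(f^{*k})+c_{k-1}L(f^{*(k-1)})+\dots+c_1 L(f)+c_0\, L\bigl(-\tfrac{1}{2}\bigr).$$
By Lemma \ref{lem:product}(2) we have $L(f^{*j})=L^j(f)$ for every $j\ge 1$, and the remark following formula \eqref{eq:operator} records that $L(-\tfrac{1}{2})=\mathbf{1}$. Substituting these identities gives $c_k L^k(f)+\dots+c_1 L(f)+c_0\mathbf{1}=P(L(f))$, as required.

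I do not expect any substantive obstacle: once Lemma \ref{lem:product}(2) is in hand, the claim is a bookkeeping argument. The only point requiring care is that the constant term $c_0$ of $P$ corresponds to the solution $-\tfrac{1}{2}c_0$ of \eqref{eq:tanno} rather than $c_0$ itself, a normalisation forced precisely by the identity $L(-\tfrac{1}{2})=\mathbf{1}$.
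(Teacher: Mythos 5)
Your argument is correct and is exactly the bookkeeping the paper leaves implicit when it says the corollary follows ``immediately'' from Lemma~\ref{lem:product}: linearity of the solution space of \eqref{eq:tanno} and of $f\mapsto L(f)$, the identity $L(f^{*j})=L^j(f)$ from part (2) of the lemma, and the normalisation $L(-\tfrac{1}{2})=\Id$ accounting for the constant term. No gaps; this is the same route as the paper.
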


\subsection{There exists a solution $f$ of \eqref{eq:tanno} such that $L(f)$ is a projector.}

We assume   that $(M, g, J)$ is a closed, connected Kähler manifold.  Our goal is to show   that 
the existence of a non-constant solution $f$ of (\ref{eq:tanno}) with $c=1$ implies the existence of a solution $\check{f}$ of 
(\ref{eq:tanno}) such that the matrix  $L(\check{f})$ is a non-trivial (i.e. $\neq 0$ and $\neq \Id$) projector. (Recall that a matrix $L$ is a \emph{projector}, if $L^2=L$.).
We need 
\begin{lem}
  \label{lem:minim}
  Let $(M, g, J)$ be a connected Kähler manifold and
  $f$  a solution of \eqref{eq:tanno} with $c=1$.
  \par Let $P(t)$ be the minimal polynomial of $L(f)$ at
  the point $\hat p \in\widehat M$. Then, $P(t)$ is the minimal
  polynomial of $L(f)$ at every point $\hat q\in \widehat M$.
\end{lem}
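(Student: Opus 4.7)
The plan is to combine the Corollary (that polynomials of $L(f)$ are themselves operators of the form $L(\tilde f)$ for a solution $\tilde f$ of \eqref{eq:tanno}) with the Frobenius-type Lemma~\ref{lem:frobenius}. The bridge between ``minimal polynomial at a single point'' and ``global annihilation'' is precisely that a member of the family $L(\cdot)$ which vanishes at one point must vanish everywhere.

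Concretely, let $P(t)$ be the minimal polynomial of $L(f)$ at $\hat p$, so $P(L(f))(\hat p)=0$. By the Corollary immediately preceding this lemma we have $P(L(f)) = L(P{*}(f))$, where $P{*}(f)$ is again a solution of \eqref{eq:tanno}. Now $L(P{*}(f))(\hat p)=0$ means exactly that the associated triple $(\tilde a_{ij},\tilde f_i,\tilde \mu)$ of the system \eqref{eq:system} vanishes at $\hat p$ (more precisely, at the underlying point of $M$). Lemma~\ref{lem:frobenius} then forces $\tilde a_{ij}\equiv 0$, $\tilde f_i\equiv 0$, $\tilde\mu\equiv 0$ on all of $M$, i.e.\ $L(P{*}(f))\equiv 0$ on $\widehat M$. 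Consequently $P(L(f))(\hat q)=0$ for every $\hat q\in\widehat M$, so the minimal polynomial $Q$ of $L(f)$ at $\hat q$ divides $P$.

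To finish, I apply exactly the same argument with the roles of $\hat p$ and $\hat q$ exchanged: starting from the minimal polynomial $Q$ at $\hat q$, the Corollary and Lemma~\ref{lem:frobenius} yield $Q(L(f))\equiv 0$ on $\widehat M$, hence $Q(L(f))(\hat p)=0$, which in turn gives $P\mid Q$. Since $P$ and $Q$ are monic and divide each other, they coincide.

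The only step that demands a moment's care is the bookkeeping of constant terms: the polynomial $P(t)=c_kt^k+\dots+c_1 t+c_0$ contributes a term $c_0\cdot\mathbf 1$ to $P(L(f))$, and one must check that this is captured on the ``function'' side by the contribution $-\tfrac12 c_0$ in $P{*}(f)=c_k f^{*k}+\dots+c_1 f-\tfrac12 c_0$, because $L(-\tfrac12)=\mathbf 1$ by the Remark following~\eqref{eq:operator}. This is already recorded in the Corollary, so no additional calculation is needed, and the argument goes through as above.
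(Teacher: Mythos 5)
Your proof is correct and follows essentially the same route as the paper: apply the Corollary to write $P(L(f))=L(P{*}(f))$, use Lemma~\ref{lem:frobenius} to propagate the vanishing at $\hat p$ to all of $\widehat M$, conclude that the minimal polynomial at $\hat q$ divides $P$, and then symmetrize. The remark about the constant term is a nice bit of care but, as you note, already absorbed into the Corollary.
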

\begin{con}
We will always assume that the leading coefficient of a minimal polynomial is $1$.
\end{con} 
\begin{proof}
  As we have already proved, there exists a solution $\tilde{f}$ such that $$P(L(f))=L(\tilde{f}).$$ Since $P(L(f))$ vanishes at the
  point $\hat p=(x_+, x_-,p)$, we obtain that  $\tilde {a}_{ij}=0$, $\tilde{f}_{i}=0$
  and $\tilde \mu=0$ at $p$, where $(\tilde {a}_{ij},\tilde{f}_{i},\tilde \mu)$ denotes the solution of \eqref{eq:system} corresponding to $\tilde{f}$. Then, by
  Lemma~\ref{lem:frobenius}, the solution $(\tilde a_{ij},\tilde{f}_{i},\tilde
  \mu)$ of \eqref{eq:system} is identically zero on $M$. Thus, $P(L(f))$
  vanishes at all points of $\widehat M$.  It follows, that the polynomial $P(t)$
  is divisible by the minimal polynomial of $L(f)$ at $\hat
  q$. By the same reasoning (interchanging $p$ and $q$), we obtain
  that $Q(t)$ is divisible by $P(t)$.  Consequently, $P(t)=  Q(t)$.
\end{proof}

\begin{cor}
  The eigenvalues of $L(f)$ are constant functions on
  $\widehat M$.
\end{cor}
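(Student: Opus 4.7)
The plan is to deduce this corollary directly from Lemma \ref{lem:minim}, with essentially no further work beyond invoking standard linear algebra and connectedness. First I would recall the basic algebraic fact that, for any endomorphism of a finite-dimensional vector space, the set of eigenvalues coincides exactly with the set of roots of the minimal polynomial. Applied pointwise to $L(f)$, this identifies the spectrum of $L(f)(\hat q)$ at each point $\hat q \in \widehat M$ with the root set of the minimal polynomial of $L(f)$ at $\hat q$.

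By Lemma \ref{lem:minim}, this minimal polynomial $P(t)$ is one and the same polynomial with fixed real coefficients, independent of the point $\hat q$. Consequently the set of roots of $P(t)$ is a fixed finite subset $\Lambda \subset \mathbb{C}$, and the spectrum of $L(f)(\hat q)$ equals $\Lambda$ at every point of $\widehat M$.

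The only remaining step is to upgrade the statement ``the set of eigenvalues is globally fixed'' to the statement ``each individual eigenvalue is a constant function on $\widehat M$''. For this I would use a continuity-plus-connectedness argument: the entries of $L(f)$ depend smoothly on $\hat q$, so its eigenvalues depend continuously on $\hat q$, and locally any choice of an eigenvalue branch yields a continuous function $\widehat M \supset U \to \Lambda$. Since $\widehat M = \mathbb{R}^2 \times M$ is connected (because $M$ is), and $\Lambda$ is finite hence discrete, any such continuous branch must be globally constant.

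I do not expect any serious obstacle: the substantive work has already been carried out in Lemma \ref{lem:minim}, whose Frobenius-system argument propagates the algebraic structure of $L(f)$ from a single point to all of $\widehat M$. The only minor subtlety to handle carefully is the continuous-dependence-of-eigenvalues step, which is what turns the global invariance of the minimal polynomial into the constancy of the individual eigenvalue functions.
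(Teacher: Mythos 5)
Your argument is correct and follows exactly the paper's own route: the eigenvalues are the roots of the minimal polynomial, which Lemma~\ref{lem:minim} shows is the same polynomial at every point, so the spectrum is globally fixed. The extra continuity-plus-connectedness step you add to pass from a constant spectrum to constant individual eigenvalue functions is a harmless refinement that the paper omits.
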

\begin{proof} By Lemma~\ref{lem:minim}, the minimal  polynomial 
does not depend on  the points of $\widehat M$. Then, the roots of the minimal  polynomial
  are also constant (i.e., do not depend on the points of  $\widehat M$). 
\end{proof}
In order to find the desired special solution of equation (\ref{eq:tanno}), we will use  that $M$ is closed.
\begin{lem}
  \label{lem:two_eigenvalues}
  Suppose $(M,g,J)$ is a closed, connected Kähler manifold. Let $f$  be a non-constant solution of \eqref{eq:tanno} with $c=1$.  Then at every point of $\widehat M$ the matrix $L(f)$ has at least two different real eigenvalues.
\end{lem}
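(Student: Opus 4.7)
The plan is to exploit the compactness of $M$ together with the corollary stated just above (the eigenvalues of $L(f)$ are constant functions on $\widehat{M}$), by evaluating $L(f)$ at critical points of $f$.

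First I would observe that at any point $p\in M$ where $f_{,i}(p)=0$, the off-diagonal blocks of the matrix representation \eqref{eq:operator} of $L(f)$ vanish at $\hat{p}=(x_{+},x_{-},p)\in\widehat{M}$: indeed $f_{i}(p)=0$ forces $\bar{f}_{i}(p)=J^{\alpha}_{i}f_{\alpha}(p)=0$, and raising indices with $g^{ij}$ preserves the vanishing. Hence $L(f)$ is block-diagonal at $\hat{p}$, with a $2\times 2$ upper-left block equal to $\mu(p)\cdot\Id=-2f(p)\cdot\Id$ (on the two-dimensional subspace coming from the $\R^{2}$-factor of $\widehat{M}$) and a lower-right block $a^{i}_{j}(p)$ acting on $T_{p}M$. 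In particular, the $\R^{2}$-subspace is invariant under $L(f)$ at $\hat{p}$ and $L(f)$ acts on it as multiplication by $-2f(p)$, so $-2f(p)$ is a real eigenvalue of $L(f)$ at $\hat{p}$.

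Since $M$ is closed and $f$ is non-constant, $f$ attains its maximum at some point $p_{\max}\in M$ and its minimum at some point $p_{\min}\in M$, with $f(p_{\max})\neq f(p_{\min})$, and both points are critical for $f$. By the preceding step, $-2f(p_{\max})$ is a real eigenvalue of $L(f)$ at $(x_{+},x_{-},p_{\max})$ and $-2f(p_{\min})$ is a real eigenvalue of $L(f)$ at $(x_{+},x_{-},p_{\min})$. Finally I would invoke Lemma~\ref{lem:minim}: the minimal polynomial of $L(f)$ is the same at every point of $\widehat{M}$, so both of these distinct real numbers are roots of this common polynomial and therefore eigenvalues of $L(f)$ at every point of $\widehat{M}$.

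There is no serious obstacle: the argument is essentially a direct consequence of the block-diagonal reduction at critical points combined with the globally constant spectrum established in Lemma~\ref{lem:minim} and its corollary. The one bit of care needed is to argue through the common minimal polynomial rather than by continuous tracking of individual eigenvalues, which guarantees that \emph{both} distinct real eigenvalues are present simultaneously at every point.
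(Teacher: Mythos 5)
Your proof is correct and follows essentially the same route as the paper's: the paper also evaluates $L(f)$ at the extrema of $\mu=-2f$, notes that the off-diagonal blocks vanish there so that $\mu_{\max}$ and $\mu_{\min}$ appear as eigenvalues, and then uses the point-independence of the spectrum (Lemma~\ref{lem:minim} and its corollary) to conclude that both distinct eigenvalues occur at every point. The only cosmetic difference is that you phrase the argument in terms of $f$ rather than $\mu$.
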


\begin{proof}
  Let $(a_{ij},f_{i},\mu)$ denote the solution of \eqref{eq:system} corresponding to $f$. Since $M$ is closed, the function $\mu$ admits its maximal  and
  minimal values $\mu_{\max}$ and $\mu_{\min}$.  Let $p \in M$ be a 
  point where $\mu=\mu_{\max}$. At this point, $\mu_{,i}=0$ implying
  $f_i=\bar {f}_i=0$. Then, the matrix of $L(f)$ at $p$ has the form

  \begin{gather}
    L(f)=\left(
      \begin{array}{cc|ccc}
        \mu_{\max}&0&0&\dots&0\\
        0&\mu_{\max}&0&\dots&0\\
        \hline
        0&0&&&\\
        \vdots&\vdots&&a^i_j&\\
        0&0&&&
      \end{array}\right)
  \end{gather}

  Thus, $\mu_{\max}$ is an eigenvalue of $L(f)$ at $p$
  and, since the eigenvalues are constant, $\mu_{\mbox{\tiny max}}$ is an eigenvalue of $L(f)$ everywhere on $M$. The same
  holds for $\mu_{\min}$. Since $f_i\not\equiv 0$, $\mu$ is not
  constant implying $\mu_{\max}\ne \mu_{\min}$. Finally, $L(f)$ has two different real eigenvalues $\mu_{\max},
  \mu_{\min}$ at every point.
\end{proof}

\begin{rem}
  \label{-3}
  For further use let us note that 
   in the proof of Lemma~\ref{lem:two_eigenvalues} we have
  proved that if $\mu_{,i}=0$ at a point $p$ then $\mu(p)$ is an
  eigenvalue of $L$.
\end{rem}
Finally, let us show that there is always a solution of (\ref{eq:tanno}) of the desired special kind:
\begin{lem}
  Suppose $(M,g,J)$ is a closed and connected Kähler manifold. For every non-constant solution $f$ of \eqref{eq:tanno} with $c=1$, there exists a polynomial $P(t)$
  such that $P(L(f))$ is a non-trivial (i.e. it is neither
  ${\bf 0}$ nor $\Id$) projector.
\end{lem}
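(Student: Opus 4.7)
The plan is to exploit the primary decomposition of $L(f)$ in terms of its minimal polynomial and then push the resulting algebraic projector back, through the correspondence $L$, to a solution of \eqref{eq:tanno}. By Lemma~\ref{lem:minim} the minimal polynomial $P_{\min}(t)$ of $L(f)$ is the same at every point of $\widehat{M}$, and by Lemma~\ref{lem:two_eigenvalues} it has at least two distinct real roots $\mu_{\max}$ and $\mu_{\min}$. Let $k$ be the multiplicity of $\mu_{\max}$ as a root of $P_{\min}$ and set $P_1(t):=(t-\mu_{\max})^{k}$ and $P_2(t):=P_{\min}(t)/P_1(t)$; both have positive degree (since $P_2$ still has $\mu_{\min}$ as a root) and they are coprime by construction.

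By Bezout there exist real polynomials $A(t),B(t)$ with $A(t)P_1(t)+B(t)P_2(t)=1$. Define
\begin{equation*}
P(t):=B(t)P_2(t).
\end{equation*}
Working modulo $P_{\min}=P_1P_2$ one checks $P(t)^{2}=B(t)P_2(t)\bigl(1-A(t)P_1(t)\bigr)\equiv B(t)P_2(t)=P(t)$, so $P(L(f))^{2}=P(L(f))$, i.e.\ $P(L(f))$ is a projector. Non-triviality is immediate from coprimality: if $P(L(f))=0$ then $P_{\min}\mid BP_2$, hence $P_1\mid B$, which substituted into $AP_1+BP_2=1$ gives $P_1\mid 1$, contradicting $\deg P_1\geq 1$; symmetrically $\Id-P(L(f))=A(L(f))P_1(L(f))$ vanishes only if $\deg P_2=0$.

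It remains to translate this back to \eqref{eq:tanno}. By the corollary to Lemma~\ref{lem:product} the function $\check{f}:=P*(f)$ is a solution of \eqref{eq:tanno} with $c=1$ and satisfies $L(\check{f})=P(L(f))$, so $L(\check{f})$ is the required non-trivial projector.

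The only real obstacle in this argument is securing a \emph{non-trivial coprime factorisation of $P_{\min}$ over $\mathbb{R}$}. A priori, $P_{\min}$ could be a power of a single irreducible real polynomial, in which case no such factorisation exists and the projector construction collapses to $0$ or $\Id$. This is precisely what Lemma~\ref{lem:two_eigenvalues} rules out — and it is exactly here that the hypothesis that $M$ is closed is being used (via extrema of $\mu$ and Remark~\ref{-3}); once two distinct real eigenvalues are in hand, the rest of the argument is purely formal linear algebra.
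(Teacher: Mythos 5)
Your proposal is correct and follows essentially the same route as the paper: both arguments rest on Lemma~\ref{lem:two_eigenvalues} (two distinct real eigenvalues), Lemma~\ref{lem:minim} (point-independence of the minimal polynomial), and the corollary to Lemma~\ref{lem:product} to realize $P(L(f))$ as $L(\check f)$. The only difference is organisational: you make explicit, via the Bezout/primary-decomposition construction, the step the paper dismisses as ``by linear algebra'', and you apply the constancy of the minimal polynomial to $L(f)$ itself to get non-triviality at every point at once, whereas the paper builds the projector at a single point and then globalises by applying Lemma~\ref{lem:minim} to $P(L(f))$.
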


\begin{proof}
  We take a point $\hat{p}\in \widehat{M}$. By Lemma~\ref{lem:two_eigenvalues},
  $L(f)$ has at least two real eigenvalues at the
  point $\hat{p}$.  Then, by linear algebra, there exists a polynomial $P$
  such that $P( L(f))$ is a nontrivial projector
  at the point $\hat{p}$. Evidently, a matrix is a nontrivial projector,
  if and only if its minimal polynomial is $t(t-1)$ (multiplied by any
  nonzero constant). Since by Lemma~\ref{lem:minim} the minimal polynomial
  of $P( L(f))$ is the same at all points, the
  matrix $P( L(f))$ is a projector at every point
  of $\widehat{M}$.
\end{proof}

Thus, given a non-constant solution of \eqref{eq:tanno} with $c=1$ on a closed and connected Kähler manifold $M$, without loss of
generality we can think that a solution $f$ of (\ref{eq:tanno}) with $c=1$ is chosen such that the corresponding $(1,1)$-tensor $L(f)$ is a non-trivial
projector.

\subsection{Structure of eigenspaces of $a_j^i$, if $L(f)$ is a  nontrivial projector}

Assuming that $L(f)$ is  a nontrivial  projector, it has precisely two eigenvalues $1$ and $0$ and the $(2n+2)$-dimensional tangent space of
$\widehat M$ at every point $\hat x=(x_+,x_-,p)$ can be decomposed
into the sum of the corresponding eigenspaces $$T_{\hat x}\widehat M =
E_{L(f)}(1)\oplus E_{L(f)}(0).$$ 
The dimensions of $E_{L(f)}(1)$ and $E_{L(f)}(0)$ are even; we assume that the dimension  of $E_{L(f)}(1)$ is $2k+2$ and the dimension of $E_{L(f)}(0)$ is $2n-2k$. 
If $(a_{ij},f_{i},\mu)$ is the solution of \eqref{eq:system} corresponding to $f$, $\mu_{\max}$ and $\mu_{\min}$ are
eigenvalues of $L(f)$, see Lemma~\ref{lem:two_eigenvalues}. Since $L(f)$ is a projector we obtain $\mu_{\min}=0\leq\mu(x)\leq 1=\mu_{\max}$ on $M$. In view of Remark~\ref{-3}, the only critical values of $\mu$ are $1$ and $0$.
\begin{lem}
Let $f$ be a solution of (\ref{eq:tanno}) with $c=1$ such that $L(f)$ is a non-trivial projector. Let $(a_{ij},f_{i},\mu)$ be the solution of \eqref{eq:system} corresponding to $f$.   
Then, the following statements hold:
  \label{lem:eigen}
  \begin{enumerate}
  \item At the point $p$ such that $0<\mu< 1$,
    $a^i_j$ has the following structure of eigenvalues and eigenspaces
    \begin{enumerate}
    \item eigenvalue $1$ with geometric multiplicity $2k$;
    \item eigenvalue $0$ with geometric  multiplicity $(2n-2k-2)$;
    \item eigenvalue $(1-\mu)$ with multiplicity $2$.
    \end{enumerate}
  \item At the point $p$ such that $\mu=1$, $a^i_j$ has the following
    structure  of eigenvalues and eigenspaces:
    \begin{enumerate}
    \item eigenvalue $1$ with geometric multiplicity $2k$;
    \item eigenvalue $0$ with geometric  multiplicity $(2n-2k)$;
    \end{enumerate}
  \item At the point $p$ such that $\mu=0$, $a^i_j$ has the following
    structure of   eigenvalues and eigenspaces:
    \begin{enumerate}
    \item eigenvalue $1$ with geometric multiplicity $2k+2$;
    \item eigenvalue $0$ with geometric multiplicity $(2n-2k-2)$.
    \end{enumerate}
  \end{enumerate}
\weg{  (see Fig.~\ref{fig:eigenspaces}).}
\end{lem}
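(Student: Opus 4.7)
The plan is to turn the hypothesis that $L(f)$ is a projector into algebraic identities on the blocks of $L(f)$ and then read off the eigenstructure of $a^i_j$ case by case. Applying the product formula \eqref{eq:product} with $F = f$ and equating the blocks of $L(f)^2$ with those of $L(f)$ yields
\begin{gather*}
f_i f^i = \mu(1-\mu), \qquad f_i \bar f^i = 0, \\
a^i_j f^j = (1-\mu)\, f^i, \qquad a^i_j \bar f^j = (1-\mu)\, \bar f^i, \\
a^i_k a^k_j = a^i_j - f^i f_j - \bar f^i \bar f_j.
\end{gather*}
The $J$-invariance of $g$ moreover gives $\bar f_i \bar f^i = f_i f^i$ and $\bar f_i f^i = 0$, which I use freely.

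For Case (1), $0 < \mu < 1$: from $f_i f^i = \mu(1-\mu) \neq 0$ the vectors $f^i$ and $\bar f^i$ are nonzero, and $f_i \bar f^i = 0$ together with $\bar f_i \bar f^i \neq 0$ shows they are linearly independent; so $W := \spann(f^i, \bar f^i)$ is two-dimensional and lies in $E_a(1-\mu)$ by the second line of identities. Set $V := \{w^i : f_i w^i = \bar f_i w^i = 0\}$; the two linear constraints are independent, hence $\dim V = 2n-2$ and $V \cap W = 0$, giving $T_pM = V \oplus W$. The third identity applied to $w \in V$ reduces to $a^2|_V = a|_V$, so $a|_V$ is a real projector and $V = V_0 \oplus V_1$ with $V_\lambda = E_a(\lambda) \cap V$. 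To fix the dimensions of $V_0, V_1$ I relate them to $E_L(1)$: substituting $\xi = (v_+, v_-, w_V + \alpha f^i + \beta \bar f^i)$ with $w_V \in V$ into $L(f)\xi = \xi$ and invoking the identities, the equations force $w_V \in V_1$, $v_+ = \alpha\mu$, and $v_- = \beta\mu$. This gives a bijection $V_1 \oplus \R^2 \cong E_L(1)$, so $\dim V_1 = \dim E_L(1) - 2 = 2k$ and hence $\dim V_0 = 2n - 2k - 2$. Since $0, 1, 1-\mu$ are pairwise distinct, this is the full eigenspace decomposition of $a^i_j$.

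For Cases (2) and (3), $\mu \in \{1,0\}$ is attained only at global extrema of $\mu$, so at such a point $\mu_{,i} = 0$ and hence $f_i = \bar f_i = 0$. The matrix $L(f)$ is then block-diagonal with blocks $\mu\cdot\Id_2$ and $a^i_j$, so the projector condition on $L(f)$ forces $a^2 = a$, making $a^i_j$ itself a real projector on $T_pM$ with eigenvalues $0$ and $1$. The multiplicities are pinned down by $\dim E_L(1) = 2k+2$: at $\mu = 1$ the scalar block contributes $2$ to $E_L(1)$, leaving $\dim E_a(1) = 2k$ and $\dim E_a(0) = 2n - 2k$; at $\mu = 0$ it contributes nothing, leaving $\dim E_a(1) = 2k+2$ and $\dim E_a(0) = 2n - 2k - 2$.

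The one non-routine step is the bijection $V_1 \oplus \R^2 \cong E_L(1)$ in Case (1): the substitution has to be carried out carefully, exploiting all three sets of identities from $L^2 = L$ in concert. Everything else is straightforward block-matrix computation together with the observation that the extrema of $\mu$ are automatically critical points.
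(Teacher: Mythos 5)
Your proof is correct, and it rests on the same engine as the paper's --- reading the blockwise consequences of $L(f)^2=L(f)$ out of the product formula \eqref{eq:product} and then counting dimensions against $\dim E_{L(f)}(1)=2k+2$ --- but the middle of the argument is organized differently. The paper never introduces your subspace $V=\{w: f_iw^i=\bar f_iw^i=0\}$ nor the fact that $a|_V$ is a projector; instead it observes directly that $E_{L(f)}(1)\cap T_pM\subseteq E_a(1)$ and $E_{L(f)}(0)\cap T_pM\subseteq E_a(0)$, bounds these intersections from below by $2k$ and $2n-2k-2$ (each eigenspace of $L$ meets the codimension-$2$ subspace $T_pM$ in at least its dimension minus $2$), adds the $2$-dimensional span of $f^i,\bar f^i$ inside $E_a(1-\mu)$, and concludes because the three lower bounds already sum to $2n$. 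Your route replaces that soft counting step with an explicit parametrization of $E_{L(f)}(1)$ as $V_1\oplus\R^2$, which is a little more work but makes the multiplicity $2k$ completely transparent and, as a by-product, exhibits the $1$-eigenvectors of $L(f)$ that do not lie in $T_pM$. Two inputs you use silently are also used (and justified) in the paper's preamble to the lemma, so they are not gaps, but worth flagging: for cases (2)--(3) you need $0\le\mu\le 1$ on $M$ (so that $\mu\in\{0,1\}$ forces $\mu_{,i}=0$, since $f_if^i=\mu(1-\mu)=0$ alone only makes $f^i$ null in indefinite signature --- you correctly avoid that trap), and throughout you need $\dim E_{L(f)}(1)$ to be constant on the connected $\widehat M$, which follows because the trace of a projector is a continuous integer-valued function.
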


\begin{con} We identify  $M$  with the set $(0,0)\times M\subset \widehat M$. 
This identification allows us to consider   $T_xM$ as  a  linear subspace of $T_{(0,0)\times x} \widehat M$: the  vector $(v_1,...,v_n)\in T_xM$ is identified with $(0,0,v_1,...,v_n)\in T_{(0,0)\times x} \widehat M$. 
\end{con} 

\begin{proof}
  Consider the point $p$ such that $0<\mu< 1$. For any vector $v\in E_{L(f)}(1)\cap T_{p}M$ we  calculate
  \begin{gather}
    L(f) v=\left(
      \begin{array}{cc|ccc}
        \mu&0&f_{1}&\dots&f_{2n}\\
        0&\mu&\bar{f}_{1}&\dots&\bar{f}_{2n}\\
        \hline
        f^{1}&\bar{f}^{1}&&&\\
        \vdots&\vdots&&a^i_j&\\
        f^{2n}&\bar{f}^{2n}&&&
      \end{array}\right)
    \left(
      \begin{array}{c}
        0\\
        0\\
        v^1\\
        \vdots\\
        v^{2n}
      \end{array}
    \right)=
    \left(
      \begin{array}{c}
        f_j v^j\\
        \bar {f}_j v^j\\
        \\
        a^i_j v^j\\
        \\
      \end{array}
    \right)
    = \left(
      \begin{array}{c}
        0\\
        0\\
        v^1\\
        \vdots\\
        v^{2n}
      \end{array}
    \right)
  \end{gather}

  Thus, $v=(v^1,\dots v^{2n})$ is an eigenvector of $a^i_j$ with
  eigenvalue $1$, hence $E_{L(f)}(1)\cap T_{p}M$ is contained in the eigenspace $E_{1}$ of $a^{i}_{j}$ with eigenvalue $1$. Similarly, any $v \in E_{L(f)}(0)\cap T_{p}M$ is an eigenvector of $a^i_j$ with eigenvalue~$0$ and $E_{L(f)}(0)\cap T_{p}M$ is contained in the eigenspace $E_{0}$ of $a^{i}_{j}$ with eigenvalue $0$. Note that the dimension of $E_{L(f)}(1)\cap T_{p}M$ is at least  
  $2k$, and the dimension of $E_{L(f)}(0)\cap T_{p}M$ is at least $2n - 2k - 2$. Let us now show that $f^i$ and $\bar{f}^i$ are eigenvectors of $a^i_j$ with eigenvalue $(1-\mu)$. We multiply the first basis vector $(1,0,\dots,0)$ by the matrix $L(f)^2-L(f)$ (which is identically zero) and obtain 
  \begin{gather}
    0 = (L(f)^2-L(f)) \left(
      \begin{array}{c}
        1\\
        0\\
        0\\
        \vdots\\
        0
      \end{array}
    \right)=
    \left(
      \begin{array}{c}
        \mu^2 + f_j f^j - \mu\\
        \bar{f}_i f^i\\
        \\
        \mu f^i + a^i_j f^j-f^i\\
        \\
      \end{array}
    \right)
  \end{gather}
  This gives us the necessary equation $a^i_j f^j= (1-\mu)f^i$. Considering the same procedure for the second basis vector $(0,1,\dots,0)$ we obtain that $\bar{f}^{i}$ is an eigenvector of $a^{i}_{j}$ to the eigenvalue $(1-\mu)$ and hence, the dimension of the eigenspace $E_{1-\mu}$ of $a^{i}_{j}$ corresponding to the eigenvalue $1-\mu$ is at least $2$.    
On the one hand, $dim E_{1}+dim E_{0}+dim E_{1-\mu}$ is at most $2n$ but on the other hand, $2k \leq dim(E_{L(f)}(1)\cap T_{p}M)\leq dim E_{1}$, $2n - 2k - 2\leq dim(E_{L(f)}(0)\cap T_{p}M)\leq dim E_{0}$ and $2 \leq dim E_{1-\mu}$ implying that 
\begin{align}
T_{p}M=E_{1}\oplus E_{0}\oplus E_{1-\mu}. 
\end{align}
Furthermore $E_{1}=E_{L(f)}(1)\cap T_{p}M$, $E_{0}=E_{L(f)}(0)\cap T_{p}M$ and $E_{1-\mu}=\spann\{f^{i},\bar{f}^{i}\}$ are of dimensions $2k$, $2n-2k-2$ and $2$ respectively.
 The proof at the points such that  $\mu=0$ or  $\mu=1$ is similar.
\end{proof}

\subsection{ If there exists a solution $f$ of equation (\ref{eq:tanno}) such that $L(f)$ is a non-trivial projector,
 the metric $g$ is positively definite on a closed $M$}
Above we have proven that, given a non-constant solution of \eqref{eq:tanno} with $c=1$, there always exists a solution $f$ of (\ref{eq:tanno})
such that the corresponding matrix $L(f)$ is a non-trivial projector. If $(a_{ij},f_{i},\mu)$ is the solution of \eqref{eq:system} corresponding to $f$, this implies that the eigenvalues and the dimension of the eigenspaces of $a^i_j$ are given by Lemma \ref{lem:eigen}. Now we are ready to prove that $g$ is positively definite
(as we claimed in Theorem~\ref{pos}).

Let us consider such a solution $f$ and the corresponding solution $(a_{ij},f_{i},\mu)$ of \eqref{eq:system}. We rewrite the second equation in (\ref{eq:system}) in the form
\begin{gather}
  \label{eq:mu}
  \mu_{,i j} =2 a_{i j} -2 \mu\, g_{i j}
\end{gather}
Let $p$ be a point where $\mu$ takes its maximum value $1$. As we have already shown
$f^i(p)=0$ and the tangent space $T_pM$ is equal to the direct sum of
the eigenspaces:
$$T_p M=E_1\oplus E_0$$
Consider the restriction of~(\ref{eq:mu}) to $E_0$. Since the restriction of
the bilinear form $a_{i j}$ to $E_0$ is identically zero, the equation
takes the form:
$$\left.\mu_{,i j}\right|_{E_0}=-2 \left.g_{i j}\right|_{E_0}.$$
But $\mu_{,i j}$ is the Hessian of $\mu$ at the maximum point
$p$. Then, it is non-positively definite. Hence, the non-degenerate metric
tensor $g_{ij}$ is positively definite on $E_0$ at $p$.
Let us now consider the distribution of the orthogonal complement
$E_1^\bot$, which is well-defined, smooth and integrable in
a neighborhood of $p$. The restriction of the metric $g$ onto $E_1^\bot$ is
non-degenerate and is positively definite in one point. Hence, by
continuity it is positively definite in a whole neighborhood.
Similarly, at a minimum point $q$ one can consider the restriction
of~(\ref{eq:mu}) to $E_1$:
$$\left.\mu_{,i j}\right|_{E_1}=2 \left.a_{i j}\right|_{E_1}=2
\left.g_{i j}\right|_{E_1},$$
since $a^{i}_{j\,|E_{1}}=\delta^{i}_{j\,|E_{1}}$, which implies that $g$ is positively definite on $E_1$ at $q$.
Considering the distribution $E_0^\bot$, we obtain that the restriction of $g$
to $E_0^\bot$ is positively definite in a neighborhood of $q$. 
Let us now consider the general point $x$, where
$$T_xM=E_1\oplus E_0 \oplus \spann\{f^i,\bar{f}^i\}.$$
We choose a piecewise smooth path $\gamma:[0,1]\rightarrow M$, connecting a point $p=\gamma(0)$ where $\mu(p)=1$ with the point $x=\gamma(1)$, such that there is no point along $\gamma$ where $\mu=0$. Since the distribution $E_{1}^{\perp}$ is differentiable on $M\setminus\{q\in M:\mu(q)=0\}$, there cannot be a change of signature of the restriction of the metric $g_{|E_{1}^{\perp}}$ along that path, unless the determinant of $g_{|E_{1}^{\perp}}$ vanishes at some point between $p$ and $x$. Since $g$ is nondegenerate, this never can happen and we obtain that $g_{|E_{1}^{\perp}}$ is positively definite at $x$. Exactly the same arguments can be used if one wants to show that $g_{|E_{0}^{\perp}}$ is positively definite at $x$. In the end, we obtain that $g$ is positively definite at each point of $M$ and hence, Theorem~\ref{pos} is proven.

\section{Proof of Theorem \ref{thm:main}}
\label{sec:proof}
Let $(M,g,J)$ be a closed, connected pseudo-Riemannian Kähler manifold and $f$ a non-constant solution of equation \eqref{eq:tanno}.
By Theorem \ref{thm:c=0}, $c\neq 0$ and the metric $g$ can be replaced by $\tilde{g}=c\cdot g$ without changing the Levi-Civita connection. The function $f$ is now a non-constant solution of \eqref{eq:tanno} with $c=1$ and using Theorem \ref{pos} we obtain that $\tilde{g}$ has to be positive-definite. Applying the classical result of Tanno \cite{Tanno1978} for positive-definite metrics, we obtain that $(M,4cg,J)$ has constant holomorphic sectional curvature equal to $1$, hence Theorem \ref{thm:main} is proven.

\nocite{*}
\bibliographystyle{plain}

\end{document}